\newcommand{\bbsym}[1]{\ensuremath{\boldsymbol{#1}}}
\definecolor{light-blue}{rgb}{0.3,0.5,0.8}
\def\BibTeX{{\rm B\kern-.05em{\sc i\kern-.025em b}\kern-.08em
    T\kern-.1667em\lower.7ex\hbox{E}\kern-.125emX}}
\title{A Feasibility Analysis at Signal-Free Intersections}
\author{Filippos N. Tzortzoglou$^1$, \textit{Student Member, IEEE,} Logan E. Beaver$^2$, \textit{Member, IEEE,} and\\
Andreas A. Malikopoulos$^1$, \textit{Senior Member, IEEE} 
\thanks{This work was supported by NSF under Grants CNS-2149520 and CMMI-2219761.} 
\thanks{$^1$Filippos N. Tzortzoglou and Andreas A. Malikopoulos are with the School of Civil and Environmental Engineering, Cornell University, Ithaca, NY 14853 USA.(emails: \tt\small{ft254@cornell.edu; amaliko@cornell.edu;)}}
\thanks{$^1$Logan E. Beaver is with the Department of Mechanical and Aerospace Engineering, Old Dominion University, Norfolk, VA 23529  (emails:\tt\small{ lbeaver@odu.edu
 )}}}
\newtheorem{theorem}{Theorem}
\newtheorem{proposition}{Proposition}
 \newtheorem{problem}{Problem}
 \newtheorem{remark}{Remark}
 \newtheorem{lemma}{Lemma}
\begin{document}

  \maketitle
 \thispagestyle{empty}
 \pagestyle{empty}
 
\begin{abstract}
In this letter, we address the problem of improving the feasible domain of the solution of a decentralized control framework for coordinating connected and automated vehicles (CAVs) at signal-free intersections as the traffic volume increases. 
The framework provides the optimal trajectories of CAVs to cross the intersection safely without stop-and-go driving. However, as the traffic volume increases, the domain of the feasible trajectories decreases. 
We use concepts of numerical interpolation to identify appropriate polynomials that can serve as alternative trajectories of the CAVs, expanding the domain of the feasible CAV trajectories. We provide the conditions under which such polynomials exist.
Finally, we demonstrate the efficacy of our approach through numerical simulations.
\end{abstract}
\begin{IEEEkeywords}
Connected automated vehicles, Traffic flow, and Interpolation.
\end{IEEEkeywords}
\vspace{-5pt}

\section{ INTRODUCTION}

In recent decades, society has witnessed a commendable effort towards improving emerging mobility systems \cite{chow2018informed}. One significant sector of this improvement is associated with the use of connected and automated vehicles (CAVs) \cite{guanetti2018control}. It has been shown that the use of CAVs can lead to a safer traffic network, improve fuel consumption, and maximize throughput on the roads \cite{du2023adaptive,rios2018impact,sarantinoudis2023ros}. 
Several research efforts have provided approaches on how CAVs can create a more sustainable traffic network \cite{shiwakoti2020investigating}. Adaptive cruise control and cooperative adaptive cruise control have been extensively discussed in the literature \cite{xiao2010comprehensive}. These approaches allow for a vehicle to define its speed/orientation trajectories according to the environment in which it exists \cite{tzortzoglou2023performance}. Attention has been given to ecological adaptive cruise control \cite{dollar2018efficient}, which also considers environmental aspects of traveling behavior. 

Recent studies have shown that a crucial reason for bottlenecks in the US is associated with the congestion on merging on-ramps, intersections, and roundabouts \cite{schrank2015urban}. Many researchers have investigated how CAVs can contribute to such scenarios using different kinds of approaches like time-optimal control strategies \cite{xiao2021decentralized, Le2023Stochastic}, reinforcement learning \cite{katriniok2022towards,10241682} and model predictive control \cite{hult2018optimal}.

A decentralized optimal control framework that minimizes fuel consumption and maximizes the throughput of CAVs at signal-free intersections was reported in \cite{Malikopoulos2020} while the problem for adjacent intersections was addressed in \cite{chalaki2020TCST}. \textit{Sun et. al} \cite{sun2020optimal} and \textit{Meng et. al} \cite{meng2020eco} studied the efficiency of eco-driving control at signalized intersections, whereas \textit{Abduljabbar et. al} \cite{abduljabbar2021control} explored the operation of CAVs under roundabouts. \textit{Bang et. al} \cite{Bang2022combined} explored a combination of coordination and eco-routing at signal-free intersections.

In this letter, we analyze the decentralized control framework reported in \cite{Malikopoulos2020} and address its limitations related to operating feasibility in scenarios with high traffic volume. The framework provides the optimal time trajectory for each CAV that arrives at an intersection's control region. However, this framework encounters a significant constraint in instances where such a time trajectory is unattainable. To address this issue, we introduce an alternative approach that enables CAVs to determine their trajectories online when no feasible solution exists. Our approach extends the problem formulation in \cite{Malikopoulos2020}, which confines the optimal unconstrained trajectory to a $3$rd-order polynomial. By recognizing that a $3$rd order polynomial solution can be restrictive, employing a higher-order polynomial may facilitate the identification of an unconstrained trajectory. Using concepts from numerical mathematics, we show that any CAV entering the intersection can extend its range of feasible solutions by interpolating a polynomial that describes its position trajectory. Nonetheless, given the potential for multiple trajectory solutions, our approach prioritizes the trajectory that minimizes jerk, thus optimizing energy efficiency and passenger comfort. Moreover, we demonstrate the feasibility of solving this optimization problem online, enabling real-time trajectory planning for vehicles. The contributions of this letter are:
\begin{itemize}
    \item Proposing a control framework for managing CAVs at signal-free intersections, ensuring feasibility even under conditions of high traffic volume.
    \item Providing conditions for deriving an unconstrained trajectory for the CAVs through numerical interpolation.
    \item Establishing an optimization problem that facilitates the real-time determination of optimal CAV trajectories.
\end{itemize}

The remainder of the letter proceeds as follows. In Section II, we introduce the problem formulation. In Section III, we provide our proposed approach, and in Section IV, we provide numerical simulations. Finally, in Section V, we draw some concluding remarks.

\begin{figure}
\centering
\includegraphics[width=0.4\textwidth]{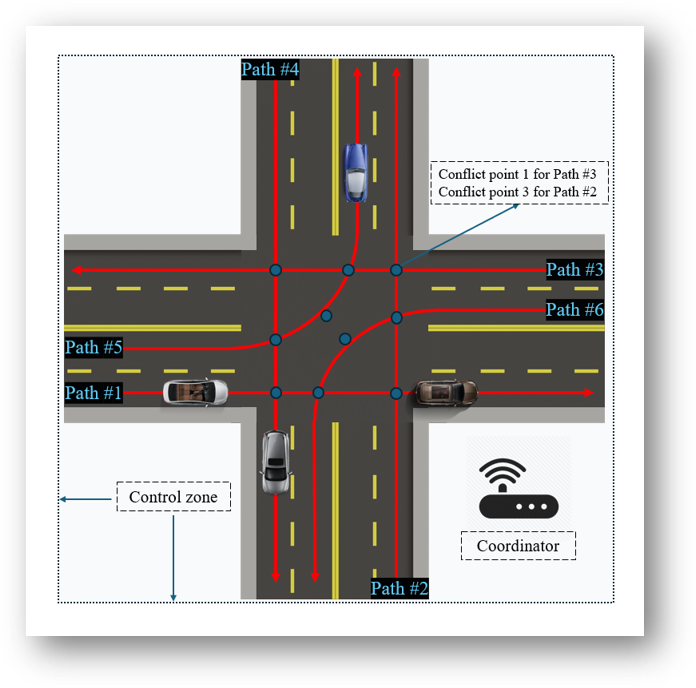}
\caption{Schematic of the intersection showing the control zone, conflict points, and paths. }
\label{fig:Intersection}
\vspace{-12pt}
\end{figure}

\section{ Problem Formulation}

We consider a signal-free intersection illustrated in Fig. \ref{fig:Intersection}, where a coordinator handles communication with the CAVs within a given range. We call this communication region \textit{control zone}. In our formulation, we consider that CAVs do not perform any lane-change maneuver, thus there exist a finite number of paths within the control zone. These paths constitute the set $\mathcal{L}=\{1,2,\ldots, z\},\; z \in \mathbb{N}$. The points where the roads intersect and lateral collisions may occur are called conflict points and belong to the set $\mathcal{O}_z \subseteq \mathbb{N}$ for each $z\in\mathcal{L}$. The set $\mathcal{N}(t) = \{1, 2, \ldots, N(t)\}$ denotes the queue of vehicles inside the control zone at time $t$, where $N(t)$ denotes the total number of vehicles within the control zone at time $t$.

We consider that the dynamics of each CAV $i \in \mathcal{N}(t)$ are described by a double integrator model, i.e.,
\begin{equation}\label{eq:model2}
\begin{split}
\dot{p}_{i}(t) &= v_{i}(t), 
\\
\dot{v}_{i}(t) &= u_{i}(t), 
\end{split} 
\end{equation}
where ${p_{i}\in\mathcal{P}}$, ${v_{i}\in\mathcal{V}}$, and
${u_{i}\in\mathcal{U}}$ denote the longitudinal position of the rear bumper, speed, and control input (acceleration) of the vehicle, respectively. 
The sets ${\mathcal{P}, \mathcal{V},}$ and $\mathcal{U}$ are compact subsets of $\mathcal{R}$. 
The control input is bounded by 
\begin{equation}\label{eq:uconstraint}
u_{\min} \leq u_{i}(t)\leq u_{\max},\quad \forall i \in \mathcal{N}(t),
\end{equation}
where ${u_{\min}<0}$ and ${u_{\max}>0}$ are the minimum and maximum control inputs, respectively, as designated by the physical acceleration and braking limits of the vehicles, or the limits of passenger comfort, whichever is more restrictive.
Next, we consider the speed limits of the CAVs, 
\begin{equation}\label{eq:vconstraint} 
v_{\min} \leq v_i(t) \leq v_{\max},\quad \forall i \in {\mathcal{N}(t) }, 
\end{equation}
where ${v_{\max}>v_{\min}>0}$ are the minimum and maximum allowable speeds. This implies that a CAV is not allowed to stop within the intersection.

\subsection{Safety Constraints}
Before we delve into the safety constraints, we define some useful notation for each path $z\in\mathcal{L}$.
To simplify our notation, we omit the subscript $z$ on variables where it does not lead to ambiguity.
Let \( p_i^0 \) be the entry point in the intersection of each CAV \( i \), \( p_i^n \) be the position at the conflict point \( n \in \mathcal{O}_z = \{1,2,3\} \) (note that each path $z$ in Fig. \ref{fig:Intersection} passes through 3 conflict points), and \( p_i^f \) be the exit point of the intersection. 
Let \( t_i^0 \in \mathbb{R}_{\ge0} \) be the time at which CAV \( i \) enters the control zone and \( t_i^{\text{n}} \in \mathbb{R}_{\ge0} \) be the time at which it reaches the conflict point \( n \in \{1,2,3\} \); and let \( t_i^{\text{f}} \in \mathbb{R}_{\ge0} \) be the time when it exits the control zone. 

Since the speed \( v_i(t) \) is bounded below by a positive number $v_{\text{min}}>0$, the position \( p_i(t) \) monotonically increases \cite{Malikopoulos2020}. Then, the time trajectory for CAV $i$ can be defined as \( t_i = p_i^{-1} \), which allows us to determine \( t_i^n \), the time at which CAV $i$ arrives at conflict point \( n \).

To avoid conflicts between vehicles in the control zone, we need to impose the following safety constraints: (1) the rear-end constraints between vehicles on the same path, and (2) the lateral safety constraints between vehicles traveling on different intersecting paths.
To ensure rear-end collision avoidance between a CAV $i \in \mathcal{N}(t)$ and its leading CAV $k \in \mathcal{N}(t) \setminus \{i\}$, we impose that
\begin{equation}\label{rearend}
t_k(p) - t_i(p) \geq \tau_{r},
\end{equation}
where $\tau_{r}$ denotes a safe time headway between two vehicles.
For lateral collision avoidance, we consider a scenario where CAV $k \in \mathcal{N}(t)\setminus\{i\}$ has a planned trajectory potentially leading to a conflict with CAV $i$. In such a scenario, we need to investigate the two following cases:

Case 1. CAV $i$ reaches the conflict point $n$ after CAV $k$. Then, we require
\begin{equation}\label{lateral1}
t^n_i - t_i(p) \geq \tau_{\ell} \quad \forall p \in [p^0_i, p^n_k],
\end{equation}
where $t_i^n$ denotes the time CAV $k$ arrives at conflict point $n$. This constraint ensures that the time headway between CAV \(i\) and conflict point \(n\) is greater than or equal to $\tau_{\ell}$ until CAV \(k\) has successfully passed through conflict point $n$.

Case 2. CAV $i$ arrives at conflict point $n$ before CAV $k$. Then, we require
\begin{equation}\label{lateral2}
t^n_k - t_k(p) \geq \tau_{\ell} \quad \forall p \in [p^0_k, p^n_i].
\end{equation}




Next, we review the optimal trajectory of a CAV $i$ by using the two-level optimization framework presented in \cite{Malikopoulos2020}. The upper-level framework focuses on determining, for every CAV \(i\in\mathcal{N}(t)\), the shortest possible time \(t_i^f\) to exit the control zone, given its desired destination. The low-level framework involves an optimization problem, the solution of which derives the optimal control input for CAV $i\in\mathcal{N}(t)$, given $t_i^f$, while adhering to constraints related to the vehicle's state, control inputs, and safety.

\subsection{Low-Level Optimization}

We start our exposition by reviewing the framework presented in \cite{Malikopoulos2020} that can aim at deriving the energy-optimal trajectory for each CAV $i$.
We consider that for every CAV $i \in {\mathcal{N}}(t),$ the time $t_i^f$ (exiting time of the control zone) is known. Then, the energy-optimal control problem finds the optimal input by solving the following optimization problem.
\begin{problem}
(\textbf{Energy-optimal control problem})  \label{prb:ocp-1}
\begin{equation}
\label{eq:energy_cost}
\begin{split}
&\underset{u_i\in\mathcal{U}}{\min} \quad \frac{1}{2} \int_{t^{0}_{i}}^{t_i^f} u^2_i(t) \, \mathrm{d}t, \\
&\text{subject to:} \\
& \quad \eqref{eq:model2}, \eqref{eq:uconstraint}, \eqref{eq:vconstraint}, \eqref{rearend},\eqref{lateral1},\eqref{lateral2},\\
& \text{given:} \\
& \quad p_i (t_i^0) = p^0, \,\, v_i (t_i^0) = v_i^0, 
 \,\, p_i (t_i^f) = p^f,
\end{split}
\end{equation}

\noindent where $v_i^0$ is the speed of CAV $i$ at $t_i^0$.
The boundary conditions in \eqref{eq:energy_cost} are set at the entry and exit of the control zone. 
\end{problem}

The closed-form solution of this problem for each CAV $i$ can be derived using the Hamiltonian analysis as presented in \cite{Malikopoulos2020}. The optimal unconstrained trajectory is as follows:
\begin{equation}\label{eq:optimalTrajectory}
\begin{split}
u_i(t) &= 6 \phi_{i,3} t + 2 \phi_{i,2}, \\
v_i(t) &= 3 \phi_{i,3} t^2 + 2 \phi_{i,2} t + \phi_{i,1}, 
\\
p_i(t) &= \phi_{i,3} t^3 + \phi_{i,2} t^2 + \phi_{i,1} t + \phi_{i,0},
\end{split}
\end{equation} 
where $\phi_{i,3}\in\mathbb{R}_{\ge0}$ and $\phi_{i,2}, \phi_{i,1}, \phi_{i,0} \in \mathbb{R}$ are constants of integration. Given the boundary conditions in Problem \ref{prb:ocp-1} 
, the optimal boundary condition ${u_i(t_i^f) = 0}$ and considering $t_i^{f}$ is known, the constants of integration can be found by:
\begin{equation} \label{matrix1}
\bbsym{\phi}_i =
\begin{bmatrix}
\phi_{i,3}
\\
\phi_{i,2}
\\
\phi_{i,1}
\\
\phi_{i,0}
\end{bmatrix}
= 
\begin{bmatrix}
(t_i^0)^3 & (t_i^0)^2 & t_i^0 & 1 
\\
3(t_i^0)^2 & 2t_i^0 & 1 & 0 
\\
(t_i^f)^3 & (t_i^f)^2 & t_i^f & 1 
\\
6t_i^f & 2 & 0 & 0 
\end{bmatrix}^{-1}
\begin{bmatrix}
p^0 
\\
v_i^0 
\\
p^f 
\\
0
\end{bmatrix}.
\end{equation}
Having defined Problem 1 we are ready to present the upper-level optimization problem where we compute $t_i^f$ which is then passed as an input in Problem 1.

\vspace{-5pt}
\subsection{Upper-level optimization problem}
\begin{problem}
(\textbf{Time-optimal control problem}) 
At the time $t_i^0$ of entering the control zone, let ${\mathcal{F}_i(t_i^0)=[\underline{t}_i^f, \overline{t}_i^f]}$ be the feasible range of travel time under the state and input constraints of CAV $i$ computed at $t_i^0$. The formulation for computing $\underline{t}_i^f$ and $\overline{t}_i^f$ can be found by letting the CAV accelerate or decelerate at the maximum rate, respectively. Then CAV $i$ solves the following time-optimal control problem to find the minimum exit time $t_i^f \in \mathcal{F}_i(t_i^0)$ that satisfies all state, input, and safety constraints:
\begin{align*}
&\underset{t_i^f\in\mathcal{F}_i(t_i^0)}{\min} \quad  t_i^f \\
&\text{subject to:} \\
& \quad \eqref{eq:model2}, \eqref{eq:uconstraint}, \eqref{eq:vconstraint}, \eqref{rearend},\eqref{lateral1}, \eqref{lateral2}\\
&\text{given:} \\& \quad p_i(t_i^0) = p_0, v_i(t_i^0) = v_i^0, p_i(t_i^f) = p_f, u_i(t_i^f) = 0
\end{align*}
\end{problem}

The computation steps for numerically solving Problem 2 are summarized as follows. First, we initialize $t_i^f = \underline{t}_i^f$, and compute the parameters $\phi_i$ using \eqref{matrix1}. We evaluate all the state, control, and safety constraints. If none of the constraints is violated, we return the solution; otherwise, $t_i^f$ is increased by a step size. The procedure is repeated until the solution satisfies all the constraints. To visualize the process, observe Fig. \ref{fig:Hamiltonian}, which illustrates a snapshot where a CAV $i$ joins the intersection. The dotted curves show the trajectories CAV $i$ can take for $t_i^f$ in the time interval $[\underline{t}_i^f,\overline{t}_i^f]$. Supplemental video describing the process can be found on the \href{https://sites.google.com/cornell.edu/feasibility/home}{\underline{paper's website}}.

Finally, by solving Problem 2, the optimal exit time $t_i^f$ along with the optimal trajectory and control law (12) are obtained for CAV $i$ for $t \in [t_i^0, t_i^f]$.

\begin{figure}[t!]
\centering
\includegraphics[width=0.4\textwidth]{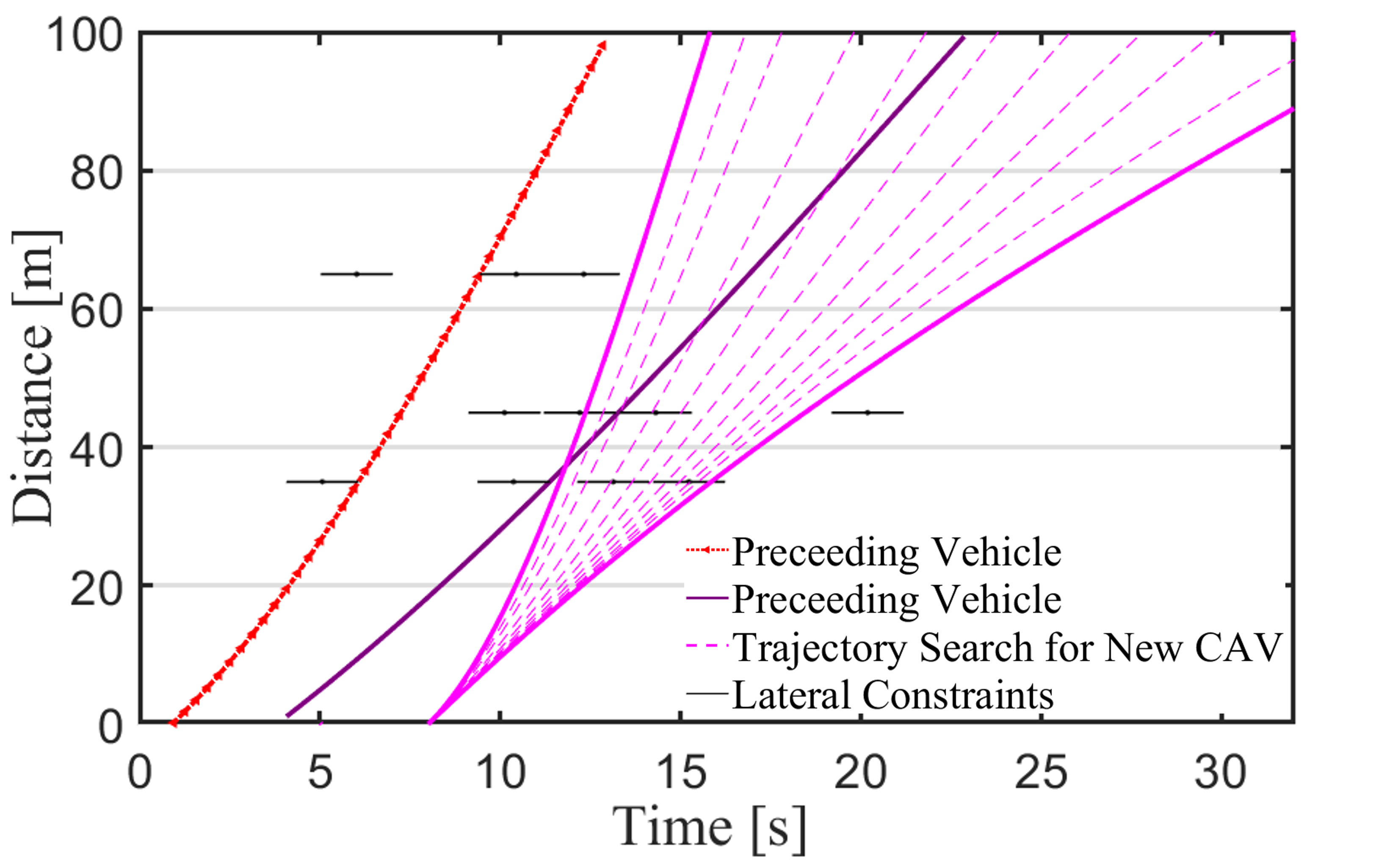}
\caption{Snapshot of a CAV entering the Intersection }
\label{fig:Hamiltonian}
\vspace{-16pt}
\end{figure}

\begin{remark} If a feasible solution to Problem 2 exists, then the solution is a cubic polynomial that guarantees none of the constraints become active. In case the solution to Problem 2 does not exist due to congestion, we derive the optimal trajectory for the CAVs by piecing together the constrained and unconstrained arcs until the solution does not violate any constraints. However, piecing together arcs is rather computationally expensive. So, in Section III, we circumvent this issue using the concept of numerical interpolation.
\end{remark}

\vspace{-7pt}

\section{Enhancing the Domain of Feasible Solutions}
In this section, we present an approach that enhances the CAV trajectories' feasible domain resulting from the solution of Problem 1. The trajectories provided by Problem 1 are energy-optimal but constrained to be $3^{rd}$ order polynomials.

\vspace{-7pt}

\subsection{Theoretical Results}
In the subsection, we provide some theoretical results needed for our exposition.


\begin{theorem}\label{th_interpolation}
Given \( n+1 \) distinct nodes, \(\{x_0, \ldots, x_{n}\}\) and \( n+1 \) corresponding values \(\{y_0, \ldots, y_{n}\}\), there exists a unique polynomial \( f(x) \) of degree \( n \), such that \( f(x_i) = y_i \) for all \( i = 1, \ldots, n+1 \).
\end{theorem}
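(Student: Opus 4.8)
The plan is to prove existence and uniqueness separately, using only elementary facts about polynomials; I will index the nodes as $x_0,\ldots,x_n$ with values $y_0,\ldots,y_n$, so that "degree $n$" is understood in the customary sense of "degree at most $n$."

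\textbf{Existence.} First I would exhibit an explicit interpolant through the Lagrange basis. For each $j\in\{0,\ldots,n\}$ set
\begin{equation*}
\ell_j(x) = \prod_{\substack{k=0\\ k\neq j}}^{n}\frac{x-x_k}{x_j-x_k},
\end{equation*}
which is well defined precisely because the nodes are distinct, so every denominator is nonzero, and is a polynomial of degree at most $n$. A direct evaluation gives $\ell_j(x_i)=\delta_{ij}$, the Kronecker delta, since for $i\neq j$ the factor with $k=i$ vanishes, while for $i=j$ every factor equals $1$. Then $f(x)=\sum_{j=0}^{n}y_j\,\ell_j(x)$ is a polynomial of degree at most $n$ with $f(x_i)=\sum_{j}y_j\delta_{ij}=y_i$ for all $i$, which establishes existence.

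\textbf{Uniqueness.} Suppose $f$ and $g$ are both polynomials of degree at most $n$ satisfying $f(x_i)=g(x_i)=y_i$ for all $i=0,\ldots,n$. Then $h:=f-g$ is a polynomial of degree at most $n$ vanishing at the $n+1$ distinct points $x_0,\ldots,x_n$. By the factor theorem applied repeatedly (equivalently, the fundamental theorem of algebra), a nonzero polynomial of degree at most $n$ has at most $n$ roots; hence $h$ is identically zero, i.e., $f=g$.

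The argument is essentially routine, so I do not expect a genuine obstacle; the only point requiring care is the interpretation of "degree $n$," because the Lagrange construction can produce a polynomial of strictly lower degree when the data happen to lie on a lower-degree curve (for instance, all $y_i$ equal). An alternative, fully equivalent route — and the one implicit in \eqref{matrix1} — is to phrase the interpolation conditions as the linear system whose coefficient matrix is the Vandermonde matrix in the nodes; its determinant equals $\prod_{i<j}(x_j-x_i)\neq 0$, which yields existence and uniqueness simultaneously. I would mention this viewpoint but carry out the proof via the Lagrange basis, since it is the most self-contained.
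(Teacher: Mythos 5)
Your proof is correct and complete: the Lagrange-basis construction for existence and the root-counting argument for uniqueness are exactly the standard argument, and it is the one given in the reference (Quarteroni et al., Theorem 8.1) that the paper cites in lieu of a proof. You also correctly handle the indexing (the paper's statement has a typo, writing $i=1,\ldots,n+1$ for nodes indexed $x_0,\ldots,x_n$) and rightly flag that ``degree $n$'' should be read as ``degree at most $n$,'' so nothing further is needed.
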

\begin{proof}
    See \cite{quarteroni2006numerical}, page 334, Theorem 8.1.
\end{proof}

\begin{theorem}
    The coefficients of a $n^{th}$ order polynomial given $n$ points are given by the following equation where the square matrix is a Vandermonde matrix:
\end{theorem}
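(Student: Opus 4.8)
The plan is to set up the interpolation conditions as a linear system in the unknown coefficients and then identify its coefficient matrix with a Vandermonde matrix. First I would write the candidate polynomial in the monomial basis, $f(x) = a_0 + a_1 x + \cdots + a_{n} x^{n}$, so that its $n+1$ coefficients $a_0,\ldots,a_n$ are the unknowns. Imposing the interpolation conditions $f(x_i) = y_i$ at the $n+1$ distinct nodes $x_0,\ldots,x_n$ yields one linear equation per node, namely $a_0 + a_1 x_i + a_2 x_i^{2} + \cdots + a_n x_i^{n} = y_i$.

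Second, I would stack these equations into the matrix form $V\,\mathbf{a} = \mathbf{y}$, where the $(i,j)$ entry of $V$ is $x_i^{\,j}$; this is by definition the Vandermonde matrix associated with the nodes $\{x_0,\ldots,x_n\}$, which matches the square matrix in the displayed equation. Third, I would invoke the classical identity $\det V = \prod_{0 \le i < j \le n}(x_j - x_i)$, which is nonzero precisely because the nodes are pairwise distinct. Hence $V$ is invertible and $\mathbf{a} = V^{-1}\mathbf{y}$, which is the claimed closed form for the coefficients. Alternatively, the nonsingularity of $V$ follows immediately from Theorem \ref{th_interpolation}: the existence and uniqueness of the interpolating polynomial of degree $n$ is exactly the statement that the linear map $\mathbf{a} \mapsto V\mathbf{a}$ is a bijection of $\mathbb{R}^{n+1}$, so $V$ is invertible and the system has the unique solution $V^{-1}\mathbf{y}$.

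The argument is essentially routine, so I do not anticipate a genuine obstacle; the one nontrivial ingredient is the nonsingularity of $V$, which is either cited through the Vandermonde determinant formula or inherited verbatim from Theorem \ref{th_interpolation}. The only point requiring care is the index bookkeeping — an $n^{th}$-order polynomial carries $n+1$ free coefficients and therefore needs $n+1$ interpolation nodes — together with keeping the row/column ordering consistent with the matrix as displayed in the statement.
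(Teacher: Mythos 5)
Your argument is correct, but it is worth noting that the paper does not actually prove this statement at all: its ``proof'' is a bare citation to \cite{shen2022polynomial}, Chapter 2, Equation (5). What you supply is the standard self-contained derivation --- expand $f$ in the monomial basis, impose $f(x_i)=y_i$ to get one linear equation per node, stack them as $V\mathbf{a}=\mathbf{y}$, and observe that $V$ is by definition the Vandermonde matrix --- together with two independent justifications of invertibility (the determinant product formula, or the bijectivity implied by Theorem \ref{th_interpolation}). That buys the reader an actual proof where the paper offers only a pointer, and it also anticipates Lemma 1 and Theorem 3 of the paper, which are exactly the determinant argument you cite. One bookkeeping caveat: you work with an $n^{\text{th}}$-degree polynomial, $n+1$ coefficients, and $n+1$ nodes, whereas the displayed system \eqref{Vandermonde} is $n\times n$ with nodes $x_0,\dots,x_{n-1}$ and highest power $x^{n-1}$, i.e.\ a degree-$(n-1)$ polynomial through $n$ points. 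The paper's own wording (``$n^{th}$ order polynomial given $n$ points'') is internally inconsistent with its display, so your off-by-one is really the paper's; you correctly flag that an $n^{\text{th}}$-order polynomial needs $n+1$ nodes, and your derivation goes through verbatim after shifting the index to match whichever convention is adopted.
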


\begin{equation}
\begin{bmatrix}
1 & x_0 & x_0^2 & \cdots & x_0^{n-1} \\
1 & x_1 & x_1^2 & \cdots & x_1^{n-1} \\
\vdots & \vdots & \vdots & \ddots & \vdots \\
1 & x_{n-1} & x_{n-1}^2 & \cdots & x_{n-1}^{n-1} \\
\end{bmatrix}
\begin{bmatrix}
\phi_{0} \\
\phi_{1} \\
\vdots \\
\phi_{n-1} \\
\end{bmatrix}
=
\begin{bmatrix}
y_0 \\
y_1 \\
\vdots \\
y_{n-1} \\
\end{bmatrix}
\label{Vandermonde}
\end{equation}

\begin{proof}
See \cite{shen2022polynomial}, Chapter 2, Equation (5).
\end{proof}

To exploit the results provided by Theorems 1 and 2, we need to ensure that the Vandermonde matrix is invertible, which is not guaranteed in general since it depends on the values \(x_0, \ldots, x_{n-1}\). The following result shows that a Vandermonde matrix has a specific determinant form. This specific form will assist us in proving that the Vandermonde matrix, as defined in \text{\eqref{Vandermonde}}, is invertible.

\begin{lemma}
    The determinant of a $n \times n$ Vanderomde matrix is equal to $\det(V) = \prod_{0 \leq i < j \leq n-1} (x_j - x_i)$.
\end{lemma}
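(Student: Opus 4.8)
The plan is to prove the Vandermonde determinant formula by induction on the matrix size $n$. I would set $V_n = V_n(x_0, \ldots, x_{n-1})$ and claim $\det(V_n) = \prod_{0 \leq i < j \leq n-1}(x_j - x_i)$. The base case $n = 1$ is trivial ($\det = 1$, empty product), and $n = 2$ gives $\det \begin{bmatrix} 1 & x_0 \\ 1 & x_1 \end{bmatrix} = x_1 - x_0$, matching the formula.

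For the inductive step, assume the formula holds for $(n-1)\times(n-1)$ Vandermonde matrices. The key manipulation is a sequence of column operations on $V_n$: working from the last column backward, replace column $C_k$ by $C_k - x_0 C_{k-1}$ for $k = n-1, n-2, \ldots, 1$. These operations do not change the determinant. After they are performed, the first row becomes $(1, 0, 0, \ldots, 0)$ because each entry $x_0^k - x_0 \cdot x_0^{k-1} = 0$, while row $i$ (for $i \geq 1$) has entries $x_i^{k} - x_0 x_i^{k-1} = x_i^{k-1}(x_i - x_0)$. Expanding the determinant along the first row reduces $\det(V_n)$ to the determinant of an $(n-1)\times(n-1)$ matrix whose $i$-th row is $x_i^{0}(x_i - x_0)\,[\,1,\ x_i,\ x_i^2,\ \ldots,\ x_i^{n-2}\,]$ for $i = 1, \ldots, n-1$.

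Next I would factor $(x_i - x_0)$ out of each row $i$ of this reduced matrix, pulling out the product $\prod_{i=1}^{n-1}(x_i - x_0)$, which leaves exactly the $(n-1)\times(n-1)$ Vandermonde matrix $V_{n-1}(x_1, \ldots, x_{n-1})$. Applying the induction hypothesis, $\det(V_{n-1}(x_1,\ldots,x_{n-1})) = \prod_{1 \leq i < j \leq n-1}(x_j - x_i)$. Combining, $\det(V_n) = \prod_{i=1}^{n-1}(x_i - x_0) \cdot \prod_{1 \leq i < j \leq n-1}(x_j - x_i) = \prod_{0 \leq i < j \leq n-1}(x_j - x_i)$, which completes the induction.

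The main obstacle, and the step requiring the most care, is bookkeeping the column operations correctly: one must verify that each operation $C_k \leftarrow C_k - x_0 C_{k-1}$ both preserves the determinant and produces the clean factorization $x_i^{k-1}(x_i - x_0)$ in every row, with the order of operations (descending $k$) ensuring no already-modified column is reused. Everything else is routine linear algebra. As an alternative I could instead cite the standard reference directly, as the paper does for Theorems 1 and 2, e.g.\ noting that this determinant identity appears in \cite{quarteroni2006numerical}; but the inductive argument above is short and self-contained enough to include.
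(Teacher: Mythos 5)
Your proof is correct and takes essentially the same route as the paper: induction on $n$, using elementary operations to extract the factors $(x_i - x_0)$ and reduce to an $(n-1)\times(n-1)$ Vandermonde matrix. You work with column operations $C_k \leftarrow C_k - x_0 C_{k-1}$ where the paper (somewhat loosely) describes row operations, but this is the same argument, and your bookkeeping of the descending order of operations is in fact stated more carefully than in the paper's own proof.
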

\begin{proof}
We use mathematical induction to prove this result. For $n=2$, we have $\det(V) = (x_1 - x_0)$. Assume the formula holds for a $(n-1) \times (n-1)$ matrix. For $n \times n$, perform row operations by subtracting from each row $i > 1$ the first row scaled by $x_{i-1}$, thus leaving a leading 1 and zeros below in the first column and extracting $(x_{i-1} - x_0)$, which does not alter $\det(V)$. This process yields an $(n-1) \times (n-1)$ Vandermonde matrix, for which, by induction, the determinant is the product of $(x_j - x_i)$ for $1 \leq i < j \leq n-1$. Multiplying by the initially factored terms completes the induction. 
\end{proof}

Next, we prove that the determinant provided in Lemma 1 is not equal to zero. Given the nature of our application, we consider that the values \(x_0, \ldots, x_{n-1}\) represent times along a CAV's trajectory, and \(y_0, \ldots, y_{n-1}\) represent the corresponding positions. Then, having known that the time trajectory is strictly increasing ($v_{min}>0$), it is sufficient to show that the matrix is invertible if $0<x_0<x_1<\ldots<x_{n-1}$. Let us prove that in the following theorem.

\begin{theorem}
The Vandermonde matrix presented in \eqref{Vandermonde} is an invertible matrix if $0<x_0<x_1<\ldots<x_{n-1}$.    
\end{theorem}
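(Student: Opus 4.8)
The plan is to show the determinant from Lemma~1 is nonzero under the stated ordering, since invertibility of a square matrix is equivalent to having a nonvanishing determinant. By Lemma~1, the determinant of the $n \times n$ Vandermonde matrix in \eqref{Vandermonde} equals $\det(V) = \prod_{0 \leq i < j \leq n-1}(x_j - x_i)$, a product over all pairs of nodes with $i < j$. So the entire argument reduces to observing that no factor in this product can vanish.

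First I would note that each factor in the product has the form $(x_j - x_i)$ with $i < j$, so by the hypothesis $0 < x_0 < x_1 < \cdots < x_{n-1}$ we have $x_j > x_i$ whenever $j > i$, hence $x_j - x_i > 0$. Thus every factor is strictly positive, and a finite product of strictly positive reals is strictly positive, giving $\det(V) > 0$. In particular $\det(V) \neq 0$, so $V$ is invertible. The positivity hypothesis $x_0 > 0$ is not strictly needed for invertibility (distinctness alone suffices), but it is consistent with the application in which the $x_k$ are arrival times; the essential fact being used is merely that the nodes are pairwise distinct, which the strict ordering guarantees.

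I would then close by tying this back to the application: since the time trajectory of a CAV is strictly increasing (because $v_{\min} > 0$), any collection of distinct sampling times automatically satisfies $0 < x_0 < x_1 < \cdots < x_{n-1}$ after relabeling, so the interpolation system \eqref{Vandermonde} always admits a unique solution for the polynomial coefficients $\phi_0, \ldots, \phi_{n-1}$, as asserted by Theorems~1 and 2.

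There is essentially no obstacle here — the statement is an immediate corollary of Lemma~1 once one observes that strict monotonicity of the nodes makes every factor $(x_j - x_i)$ positive. The only thing to be careful about is bookkeeping on the index set of the product (ensuring it is exactly the pairs with $i < j$, matching Lemma~1) so that no factor is inadvertently of the form $(x_i - x_j)$ with the wrong sign; with the ordering fixed as in the hypothesis, this is automatic.
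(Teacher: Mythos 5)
Your proof is correct and follows the same route as the paper's: apply Lemma~1's determinant formula and observe that the strict ordering of the nodes makes every factor $(x_j - x_i)$ nonzero, so $\det(V) \neq 0$ and $V$ is invertible. Your additional observations (that the determinant is in fact positive, and that the hypothesis $x_0 > 0$ is not needed for invertibility) are accurate refinements but do not change the argument.
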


\begin{proof}
    From Lemma 1, the determinant of a Vandermonde matrix $V$ is $\det(V) = \prod_{0 \leq i < j \leq n-1} (x_j - x_i).$ Since $x_0<x_1<\ldots<x_{n-1}$, it follows that for all $i, j$ with $0 \leq i < j \leq n-1$, we have $x_j - x_i \neq 0$. Therefore, $\prod_{0 \leq i < j \leq n-1} (x_j - x_i)\neq 0.$ A non-zero determinant implies that the matrix $V$ is invertible and the proof is complete.
\end{proof}

\vspace{-8pt}
\subsection{Implementation in CAVs}
Next, we apply the results above to improve the feasibility of the intersection. Consider a CAV $i$ that cannot find a feasible trajectory after solving Problem 2. Then we aim to construct a $n_{th}$ order polynomial using Theorems 1, 2, and 3, which corresponds to a feasible position trajectory \( p(t) \) of CAV \( i \), as defined in  \eqref{eq:model2}. In other words, our objective is to identify $n$ points $(p_i, t_i)$ along the trajectory of CAV $i$ to interpolate. These nodes enable the use of \eqref{Vandermonde} and must ensure that the resulting polynomial adheres to all constraints and is energy efficient. 

We choose to construct a 4\textsuperscript{th} order polynomial, inspired by the fact that we already know the location of the position nodes \(p_i^0\), \(p_i^1\), \(p_i^2\), \(p_i^3\), and \(p_i^f\). Given that we know these position nodes, we are searching for the corresponding times  \(t_i^1\), \(t_i^2\), \(t_i^3\), and \(t_i^f\) such that we can compute the coefficients $\phi_i$ of the $4^{th}$ order polynomial using \eqref{Vandermonde}. Recall that \(t_i^0\) is considered known, as it represents the time of entry into the control zone. 

\begin{remark}
Note that our approach generalizes the $3^{\text{rd}}$ order polynomial solution and can yield solutions in a broader case of traffic scenarios.
Furthermore, for systems with $n^{th}$ order integrator dynamics, the control-minimizing trajectory is a $2(n-1)$ order polynomial \cite{beaver2023lq}.
Thus, we impose a smoother trajectory that minimizes energy consumption, effectively handling the trade-off between passenger comfort and energy efficiency.
\end{remark}

Next, we discuss how our approach can efficiently address lateral and rear-end constraints.

\vspace{-6pt}
\subsection{Lateral constraints}
In Section II.A, we pointed out that the lateral constraints for every CAV $i$ are only associated with the points \((p_i^1, t_i^1), (p_i^2, t_i^2),(p_i^3, t_i^3)\). 
However, to interpolate from appropriate times \( t_i^1, t_i^2, \) and \( t_i^3 \) for a CAV \( i \) on path $z\in\mathcal{L}$, it is essential to ensure that there is always a non-empty set in which \( t_i^1, t_i^2, \) and \( t_i^3 \) can belong to. Namely, we want to show that for any path $z\in\mathcal{L}$, there is a sufficiently large gap between vehicles at each node. Next, we provide a condition for the existence of such a gap.

\begin{proposition} \label{proposition}
Consider a CAV \(i \in \mathcal{N}(t)\) and a preceding CAV \(k \in \mathcal{N}(t) \setminus \{i\}\) on the same path $z\in\mathcal{L}$. If we enforce $\tau_{r}>=2\tau_{\ell}$, where $\tau_{r}$ and $\tau_{\ell}$ are defined in \eqref{rearend}, \eqref{lateral1} and \eqref{lateral2},  then  for every conflict point $n\in\{1,2,3\}$ along path $z$, there exists a non-empty feasible time set \(\mathcal{M}_n = [\underline{\tau}, \overline{\tau}] \neq \emptyset\), within which a CAV \(j\) from a path conflicting with path $z$ can safely pass between the two CAVs \(i\) and \(k\).
\end{proposition}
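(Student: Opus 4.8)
The plan is to translate the headway constraints \eqref{rearend}, \eqref{lateral1}, \eqref{lateral2} into explicit bounds on the arrival times at a fixed conflict point $n$, and then show that the rear-end spacing $\tau_r$ between the consecutive CAVs $i$ and $k$ on path $z$ is wide enough to host the two lateral buffers of width $\tau_\ell$ that an interloping CAV $j$ requires. First I would fix the conflict point $n\in\{1,2,3\}$ and denote by $t_k^n$ and $t_i^n$ the times at which the preceding CAV $k$ and the following CAV $i$ occupy $p^n$. Since the position is strictly increasing and $k$ precedes $i$ on the same path, the rear-end constraint \eqref{rearend} evaluated at $p=p^n$ gives $t_i^n - t_k^n \ge \tau_r$, i.e. the window $(t_k^n,\,t_i^n)$ during which neither $i$ nor $k$ blocks the conflict point has length at least $\tau_r$.

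Next I would characterize what it means for the conflicting CAV $j$ to ``pass safely between'' $i$ and $k$ at conflict point $n$: $j$ must clear $p^n$ after $k$ has vacated it (Case~1 applied to the pair $(k,j)$ with $k$ first) and before $i$ reaches it (Case~2 applied to the pair $(j,i)$ with $j$ first). Reading off \eqref{lateral1}–\eqref{lateral2}, the first requirement forces $j$'s arrival time $t_j^n$ to satisfy $t_j^n \ge t_k^n + \tau_\ell$, and the second forces $t_j^n \le t_i^n - \tau_\ell$. Hence the admissible set for $t_j^n$ is exactly
\begin{equation}
\mathcal{M}_n = [\,\underline{\tau},\,\overline{\tau}\,] = \bigl[\,t_k^n + \tau_\ell,\; t_i^n - \tau_\ell\,\bigr].
\end{equation}
This interval is non-empty precisely when $t_i^n - t_k^n \ge 2\tau_\ell$. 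Combining this with the rear-end bound $t_i^n - t_k^n \ge \tau_r$ and the hypothesis $\tau_r \ge 2\tau_\ell$ yields $t_i^n - t_k^n \ge \tau_r \ge 2\tau_\ell$, so $\overline{\tau} - \underline{\tau} = (t_i^n - t_k^n) - 2\tau_\ell \ge 0$, i.e. $\mathcal{M}_n \neq \emptyset$. Since $n$ was arbitrary, the claim holds for all three conflict points along $z$.

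The step I expect to be the main obstacle is not the final inequality, which is essentially bookkeeping, but pinning down the precise logical form of ``passing safely between'' so that exactly the two buffers $\tau_\ell$ appear and no others — in particular, making sure that the constraint for the pair $(k,j)$ is the Case~1 form and the constraint for $(j,i)$ is the Case~2 form, rather than double-counting a headway or omitting one. A secondary subtlety is that \eqref{lateral1}–\eqref{lateral2} are stated as constraints over a range of positions $p$, not just at the conflict point; I would note that evaluating them at $p = p^n_{(\cdot)}$ (the relevant endpoint of the stated interval) recovers exactly the pointwise arrival-time separations used above, which is enough for the existence claim. One should also remark that $\mathcal{M}_n$ depends only on the already-scheduled trajectories of $i$ and $k$ and not on CAV $j$'s dynamics, so the proposition genuinely guarantees a slot that CAV $j$'s interpolated polynomial can later be required to hit.
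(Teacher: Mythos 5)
Your proof is correct and follows the same idea as the paper's: the rear-end headway guarantees $t_i^n - t_k^n \ge \tau_r$, and splitting off a lateral buffer $\tau_\ell$ on each side leaves the non-empty window $[\,t_k^n+\tau_\ell,\ t_i^n-\tau_\ell\,]$ precisely because $\tau_r \ge 2\tau_\ell$. The paper's own proof is essentially a one-sentence assertion of this, so your version simply supplies the bookkeeping (and the correct orientation of the rear-end inequality) that the authors leave implicit.
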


\begin{proof}
From \eqref{rearend}, the minimum headway between CAVs $i$ and $k$ is $\tau_{r}$. Given $\tau_{r}\geq2\tau_{\ell}$ where $\tau_{\ell}$ is the safety headway for lateral conflict \eqref{lateral1} and \eqref{lateral2}, there is always feasible gap between CAVs $i$ and $k$ such that a CAV $j$ from a conflicting path can safely pass between $i$ and $k$. 
\end{proof} 

To visualize this result, we note in Fig. \ref{fig:Relatioship} that between CAV $i$ and $k$ there is always time for a CAV $j$ from a conflicting path to pass safely.


\begin{figure}[t!]
\centering
\includegraphics[width=0.33\textwidth]{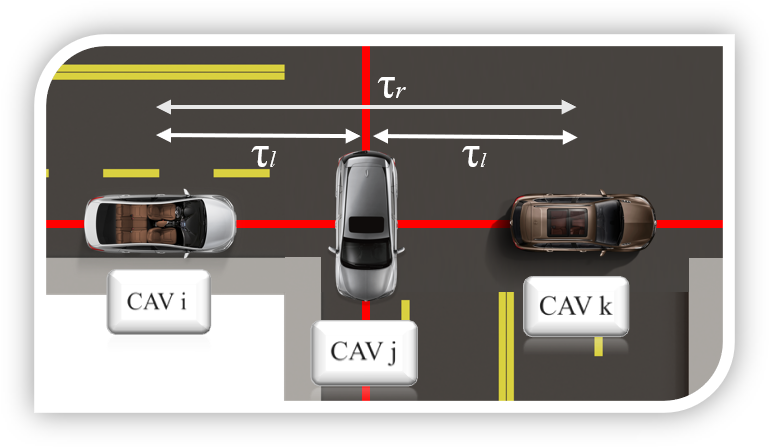}
\caption{Relationship between minimum rear-end and lateral time headways}
\label{fig:Relatioship}
\vspace{-12pt}
\end{figure}

\begin{remark}
We have ensured the existence of \(t_i^1, t_i^2,\) and \(t_i^3\) for the lateral safety constraint. Given that the lateral constraints can only occur at points \((p_i^1, t_i^1), (p_i^2, t_i^2),(p_i^3, t_i^3)\), and given that the function $p(t)$ is strictly increasing, the final trajectory cannot make any of the lateral constraints become active.
\end{remark}

\begin{remark}
Proposition 1 enhances the feasibility of a CAV to navigate between two other CAVs from a conflicting path. While the inequality \(\tau_{r} \geq 2\tau_{\ell}\) mandates that vehicles on the same path maintain a greater distance, this requirement, not included in \cite{Malikopoulos2020}, actually broadens the practical applicability of our approach. This is because it ensures we can invariably identify time points \(t_i^n\) that adhere to the lateral constraints.

\end{remark}

\vspace{-6pt}
\subsection{Rear-End Constraints}
Although we have proven that \(t_i^1\), \(t_i^2\), and \(t_i^3\) can always be feasible to guarantee lateral constraints, we must also define conditions for the rear-end constraints. An analytical approach offering such a guarantee is non-trivial, given the polynomial's design through \eqref{Vandermonde} and has not been explicitly addressed in the literature to date. Although the resulting polynomial does not make the rear-end constraints active, when for a CAV \(i\) and a preceding CAV \(k\) the condition \(t^n_i \geq t^n_k + \tau_{r}\) is met, we lack theoretical guarantees for such a result. In instances where rear-end constraints may become active, the approach presented in \cite{bang2023optimal} (see Section 4.2) can be employed. 
Finally, our solution must also adhere to vehicle speed and acceleration constraints, including their maximum and minimum limits. These constraints are discussed in the following subsection.

\vspace{-6pt}
\subsection{Solution}

In this subsection, we analyze the solution approach. Recall that we aim to minimize the jerk of the position trajectory. Therefore, let us define a function \( g : \mathbb{R}^5 \to \mathbb{R}\) which takes as parameters the vector \( \{t_i^0, t_i^1, t_i^2, t_i^3, t_i^f \} \), computes the 4th order polynomial trajectory \( p_i(t) \) using \eqref{Vandermonde} and returns the jerk of that polynomial, which is the third derivative of \( p_i(t) \). So, we aim to find the optimal values of $t_i^1, t_i^2, t_i^3, t_i^f$ to minimize the jerk of CAV $i$. Recall that $t_i^0$ is known.

Before introducing the optimization framework that addresses this problem, we denote the feasible time set for each parameter $t_i^n$ as $\mathcal{T}_i^n,\;n\in\{1,2,3,f\}$. Note that each time set $\mathcal{T}_i^n,n\in\{1,2,3\}$ consists of a union of disjoint compact time sets due to the existence of lateral constraints associated with the set of conflict points $\mathcal{O}_z=\{1,2,3\}$. In other words, for each conflict point $n\in\mathcal{O}_z$ we have  $\mathcal{T}_{i}^n=\{\mathcal{T}_{i,1}^n,\mathcal{T}_{i,2}^n,...,\mathcal{T}_{i,\mathcal{D}}^n\}$. For example, in Fig. \ref{fig:Hamiltonian}, the red arrowed curve refers to a position trajectory of CAV $i$ that passes through the disjoint sets $\mathcal{T}_{i,1}^1$, $\mathcal{T}_{i,0}^2$ and $\mathcal{T}_{i,1}^3$. 


Conversely, the exit time $t_i^f$ is subject to rear-end constraints or the lower bound $\underline{t}_i^f$ whichever is more restrictive, and the upper bound $\overline{t}_i^f$. This ensures that the set of feasible exit times, denoted as $\mathcal{T}_i^f$, forms already a compact set.

Considering all the disjoint sets in $\mathcal{T}_{i}^n$ for each conflict point $n\in\{1,2,3\}$ as the feasibility domain of $t_i^n$ does not facilitate the optimization process. 
Consequently, our goal is to limit the feasibility set for each $t_i^n$ to a single convex set. To achieve this, we need to select only one set among the multiple disjoint sets included in $\mathcal{T}_i^n$.
For each CAV $i$ and for each conflict point $n\in \mathcal{O}_z$, we introduce Algorithm 1 to achieve the selection of a single convex set ${\mathcal{T}_i^{n}}^* \subseteq \mathcal{T}_i^n$, focusing on jerk minimization.

\begin{algorithm}
\caption{Selection of ${\mathcal{T}_i^{n}}^*$ when CAV $i$ joins the intersection and Problem 2 does not offer a solution.}
\label{alg:optimal_trajectory_selection}
\begin{algorithmic}
\ENSURE Selection of an efficient ${\mathcal{T}_i^{n}}^*$ for all $n \in \{1,2,3\}$.
\FOR{each $n\in\{1,2,3\}$}
    \FOR{each disjoint set $\mathcal{T}_{i,j}^n \in \mathcal{T}_i^n$}
        \STATE Average time: $t_{i,\text{avg},j}^n = \frac{\min(\mathcal{T}_{i,j}^n) + \max(\mathcal{T}_{i,j}^n)}{2}$.
    \ENDFOR
\ENDFOR
\STATE 1. Evaluate all different $p_i(t)$ based on $t_{i,\text{avg},j}^n \forall n \in \{1,2,3\}$.
\vspace{-10pt}
\STATE 2. Identify the combination that has the minimum jerk.
\STATE 3. Determine the optimal convex set ${\mathcal{T}_i^{n}}^*\;\forall n \in \{1,2,3\}$, based on the identified combination in step 2.
\end{algorithmic}
\end{algorithm}

Now that we have defined a convex feasible set for each $t_i^n$ we are in position to present our minimization problem.


\begin{problem}{\textbf{Energy-optimal Control Problem}}
\begin{align}
&\min_{t_i^1, t_i^2, t_i^3, t_i^f } \quad  \int_{t_i^0}^{t_f} (g(t;t_i^0, t_i^1, t_i^2, t_i^3, t_i^f))^2 \,dt \\
&\text{subject to:} \nonumber\\
&t_i^0\; \text{given},\; t_i^1 \in {\mathcal{T}_i^1}^*,\; t_i^2 \in {\mathcal{T}_i^2}^*, \;t_i^3 \in {\mathcal{T}_i^3}^*,\; t_i^f \in {\mathcal{T}_i^f}^*, \nonumber\\
&  p_i(t_i^0) = p_i^0,\; v_i(t_i^0) = v_i^0, \; \eqref{eq:model2}, \; \eqref{eq:uconstraint},\; \eqref{eq:vconstraint}.
\end{align}
\end{problem}


Given that the feasibility domain of Problem 3 forms a convex set, it streamlines the search process, facilitating a rapid acquisition of a numerical solution. The source code for this optimization problem on the \href{https://sites.google.com/cornell.edu/feasibility/home}{\underline{paper's website}}. 

Our analysis has focused on defining a position trajectory for cases where the solution provided by Problem 2 is not viable. It is important to note that while it is always possible to construct such a polynomial that satisfies read-end and lateral constraints, in rare instances, there is no combination of $t_i^n$ that can lead to a solution that satisfies the constraints \eqref{eq:uconstraint} and \eqref{eq:vconstraint}. In other words, although our approach expands the feasibility domain provided in \cite{Malikopoulos2020}, there might be cases where a feasible solution is still not viable due to an extremely highly congested environment. There are several strategies to address these scenarios. One method involves implementing dynamically changing speed limits based on intersection traffic conditions. Another strategy is establishing initial state conditions ensuring compliance with all constraints. Then, we can define a buffer zone before the control zone in which the vehicles adjust their state. Also, integrating these constraints directly into the polynomial coefficients is a viable approach. Current research is directed towards exploring these solutions.


\section{Numerical Simulations}

To validate our framework, we conducted numerical simulations using Matlab and PTV VISSIM software. We modeled an intersection, shown in Fig.~\ref{fig:Intersection}, with each path \(z \in \mathcal{L}\) being $100$ m in length. In Fig.~\ref{fig:Results only Hamiltonian}, we demonstrate for path 6, the feasibility benefits offered by Proposition 1, which are achieved without the need for polynomial interpolation for any of the four trajectories. Notably, the zoomed-in frames reveal that there is always a sufficient gap between lateral constraints at each conflict point \(n\), enabling an incoming CAV to define its trajectory—a consideration not accounted for in \cite{Malikopoulos2020}. Conversely, Fig.~\ref{fig:Results Interpolation} presents a scenario on path 3, where unconstrained trajectories by Problem 2, proved to be unfeasible for 2 CAVs. As a result, we resorted to addressing Problem 3 by interpolating a polynomial aimed at minimizing jerk. Upon examining the trajectories of the CAVs with the dotted curves, it becomes evident that such trajectories could not have been achieved using a \(3^{rd}\) order polynomial. The execution time for defining such polynomials using Problem 3 was \(9 \times 10^{-3}\) seconds with an Intel i9 processor with 64 GB RAM and a 3.4 GHz clock speed. We encourage interested readers to visit the \href{https://sites.google.com/cornell.edu/feasibility/home}{\underline{paper's website}} to find supplemental simulation results provided by PTV VISSIM.

\begin{figure}[t!]
\centering
\includegraphics[width=0.5\textwidth]{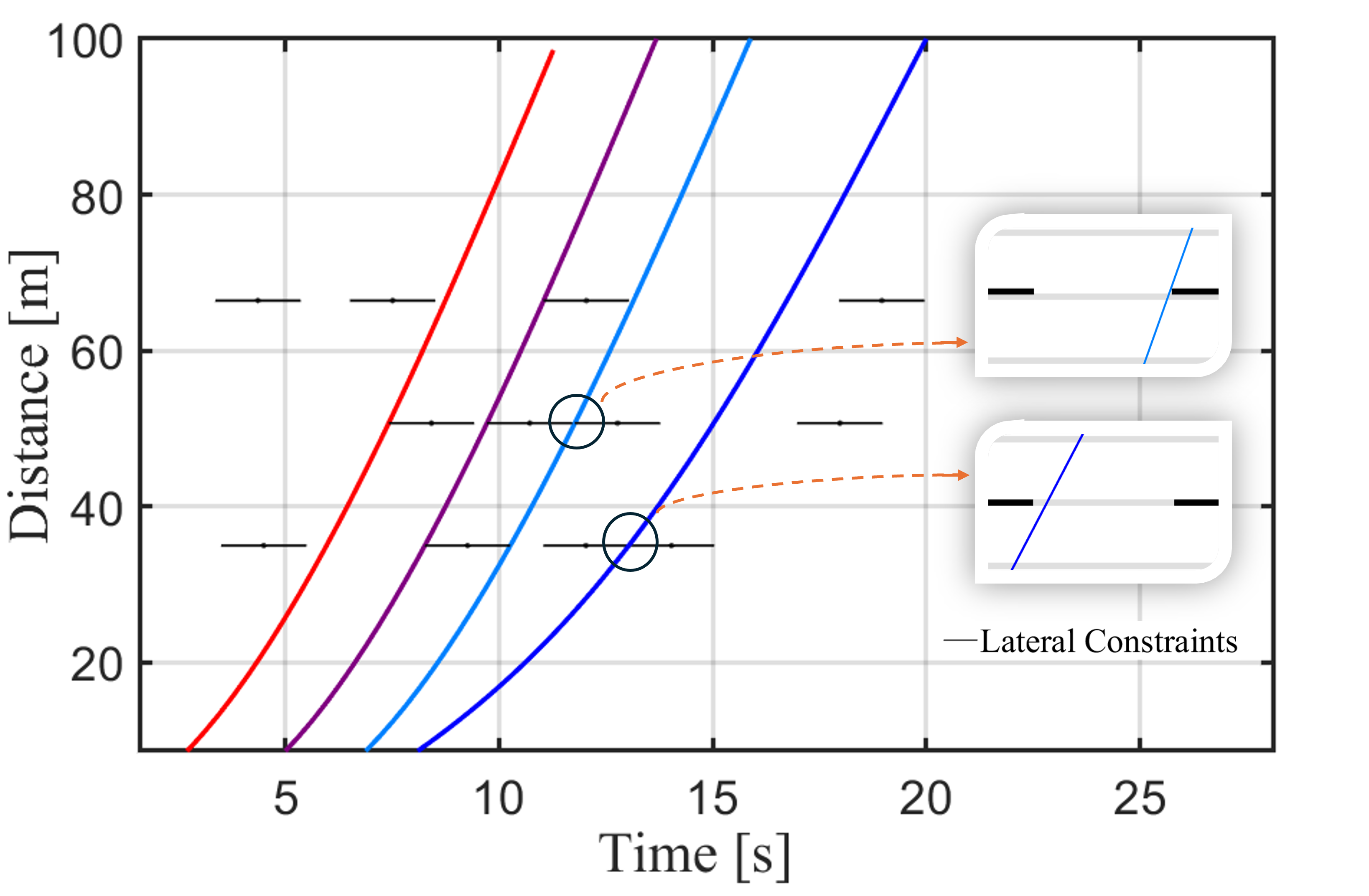}
\caption{CAV trajectories defined only from Problem 2.}
\label{fig:Results only Hamiltonian}
\vspace{-7pt}
\end{figure}

\begin{figure}[t!]
\centering
\includegraphics[width=0.5\textwidth]{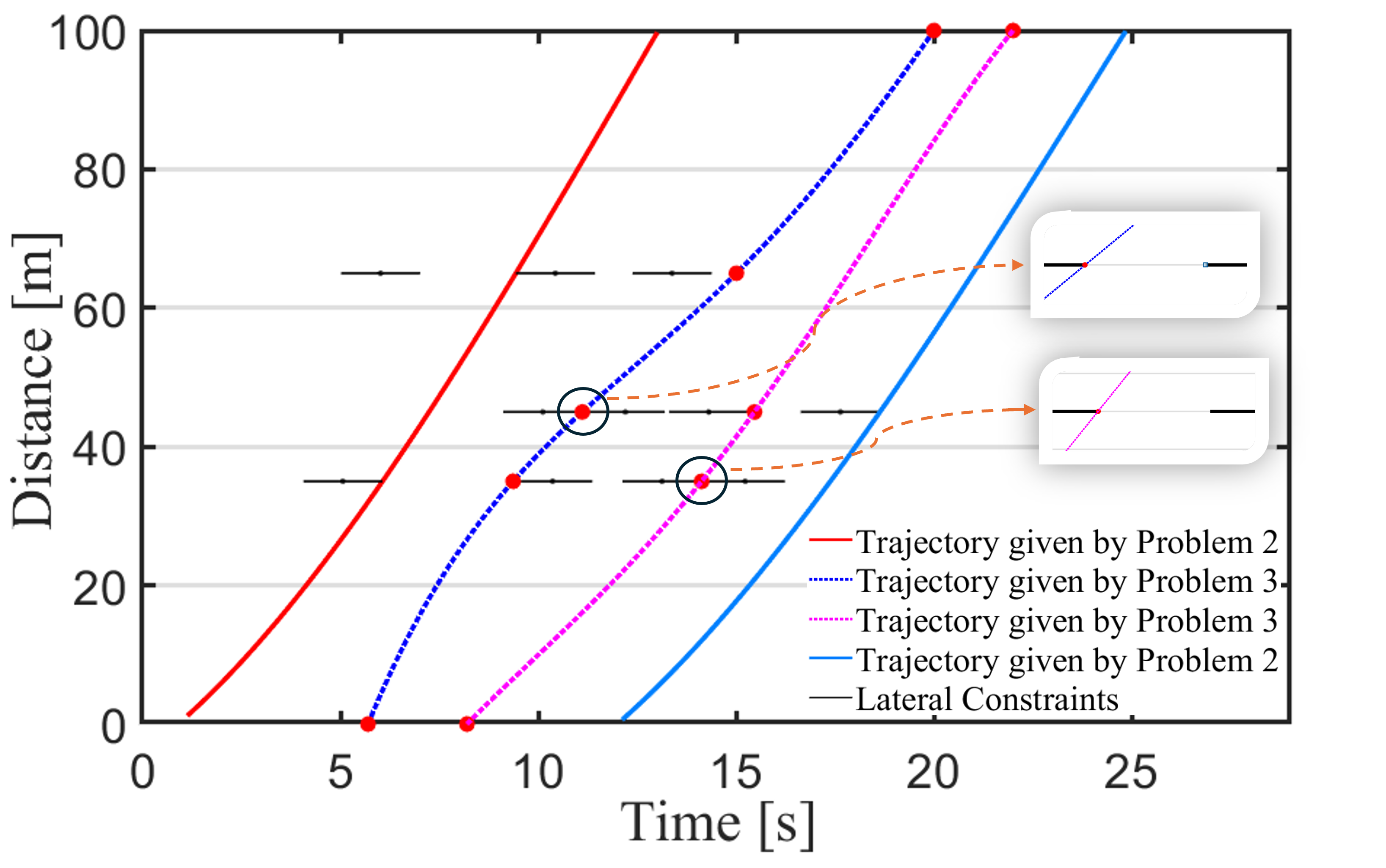}
\caption{CAV trajectories defined from both Problem 2 and 3.}
\label{fig:Results Interpolation}
\vspace{-15pt}
\end{figure}

\vspace{-10pt}

\section{Concluding Remarks}
In this letter, we addressed the challenge of expanding the feasible solution domain within a decentralized control framework for CAVs at signal-free intersections. The proposed approach employs numerical interpolation techniques to establish a feasible trajectory for CAVs crossing a signal-free intersection. Our findings demonstrate that our approach can extend the feasibility domain and provide a real-time solution. A potential direction for future research is generalizing this method for more complex traffic scenarios and integrating it within mixed traffic environments.

\linespread{0.99}\selectfont
\bibliographystyle{ieeetr}
 \bibliography{bibliography.bib,IDS_Publications_01012024}

\begin{thebibliography}{10}

\bibitem{chow2018informed}
J.~Chow, {\em Informed Urban transport systems: Classic and emerging mobility methods toward smart cities}.
\newblock Elsevier, 2018.

\bibitem{guanetti2018control}
J.~Guanetti, Y.~Kim, and F.~Borrelli, ``Control of connected and automated vehicles: State of the art and future challenges,'' {\em Annual reviews in control}, vol.~45, pp.~18--40, 2018.

\bibitem{du2023adaptive}
Y.~Du, M.~A. Makridis, C.~M. Tamp{\`e}re, A.~Kouvelas, and W.~ShangGuan, ``Adaptive control with moving actuators at motorway bottlenecks with connected and automated vehicles,'' {\em Transportation research part C: emerging technologies}, vol.~156, p.~104319, 2023.

\bibitem{rios2018impact}
J.~Rios-Torres and A.~A. Malikopoulos, ``Impact of partial penetrations of connected and automated vehicles on fuel consumption and traffic flow,'' {\em IEEE Transactions on Intelligent Vehicles}, vol.~3, no.~4, pp.~453--462, 2018.

\bibitem{sarantinoudis2023ros}
N.~Sarantinoudis, G.~Tsinarakis, L.~Doitsidis, S.~Chatzichristofis, and G.~Arampatzis, ``A ros-based autonomous vehicle testbed for the internet of vehicles,'' in {\em 2023 19th International Conference on Distributed Computing in Smart Systems and the Internet of Things (DCOSS-IoT)}, pp.~726--733, IEEE, 2023.

\bibitem{shiwakoti2020investigating}
N.~Shiwakoti, P.~Stasinopoulos, and F.~Fedele, ``Investigating the state of connected and autonomous vehicles: A literature review,'' {\em Transportation Research Procedia}, vol.~48, pp.~870--882, 2020.

\bibitem{xiao2010comprehensive}
L.~Xiao and F.~Gao, ``A comprehensive review of the development of adaptive cruise control systems,'' {\em Vehicle system dynamics}, vol.~48, no.~10, pp.~1167--1192, 2010.

\bibitem{tzortzoglou2023performance}
F.~N. Tzortzoglou, D.~Theodosis, A.~Dave, and A.~Malikopoulos, ``Performance-sensitive potential functions for efficient flow of connected and automated vehicles,'' {\em arXiv preprint arXiv:2309.01238}, 2023.

\bibitem{dollar2018efficient}
R.~A. Dollar and A.~Vahidi, ``Efficient and collision-free anticipative cruise control in randomly mixed strings,'' {\em IEEE Transactions on Intelligent Vehicles}, vol.~3, no.~4, pp.~439--452, 2018.

\bibitem{schrank2015urban}
D.~Schrank, B.~Eisele, T.~Lomax, and J.~Bak, ``urban mobility scorecard,” texas a\&m transp,'' {\em Inst., College Station, TX, USA}, 2015.

\bibitem{xiao2021decentralized}
W.~Xiao and C.~G. Cassandras, ``Decentralized optimal merging control for connected and automated vehicles with safety constraint guarantees,'' {\em Automatica}, vol.~123, p.~109333, 2021.

\bibitem{Le2023Stochastic}
V.-A. Le, B.~Chalaki, F.~N. Tzortzoglou, and A.~A. Malikopoulos, ``Stochastic time-optimal trajectory planning for connected and automated vehicles in mixed-traffic merging scenarios,'' {\em arXiv preprint arXiv:2311.00126}, 2023.

\bibitem{katriniok2022towards}
A.~Katriniok, ``Towards learning-based control of connected and automated vehicles: Challenges and perspectives,'' {\em AI-enabled Technologies for Autonomous and Connected Vehicles}, pp.~417--439, 2022.

\bibitem{10241682}
K.~Katzilieris, E.~Kampitakis, and E.~I. Vlahogianni, ``Dynamic lane reversal: A reinforcement learning approach,'' in {\em 2023 8th International Conference on Models and Technologies for Intelligent Transportation Systems (MT-ITS)}, pp.~1--6, 2023.

\bibitem{hult2018optimal}
R.~Hult, M.~Zanon, S.~Gros, and P.~Falcone, ``Optimal coordination of automated vehicles at intersections: Theory and experiments,'' {\em IEEE Transactions on Control Systems Technology}, vol.~27, no.~6, pp.~2510--2525, 2018.

\bibitem{Malikopoulos2020}
A.~A. Malikopoulos, L.~E. Beaver, and I.~V. Chremos, ``Optimal time trajectory and coordination for connected and automated vehicles,'' {\em Automatica}, vol.~125, no.~109469, 2021.

\bibitem{chalaki2020TCST}
B.~Chalaki and A.~A. Malikopoulos, ``Optimal control of connected and automated vehicles at multiple adjacent intersections,'' {\em IEEE Transactions on Control Systems Technology}, vol.~30, no.~3, pp.~972--984, 2022.

\bibitem{sun2020optimal}
C.~Sun, J.~Guanetti, F.~Borrelli, and S.~J. Moura, ``Optimal eco-driving control of connected and autonomous vehicles through signalized intersections,'' {\em IEEE Internet of Things Journal}, vol.~7, no.~5, pp.~3759--3773, 2020.

\bibitem{meng2020eco}
X.~Meng and C.~G. Cassandras, ``Eco-driving of autonomous vehicles for nonstop crossing of signalized intersections,'' {\em IEEE Transactions on Automation Science and Engineering}, vol.~19, no.~1, pp.~320--331, 2020.

\bibitem{abduljabbar2021control}
M.~Abduljabbar, N.~Meskin, and C.~G. Cassandras, ``Control barrier function-based lateral control of autonomous vehicle for roundabout crossing,'' in {\em 2021 IEEE International Intelligent Transportation Systems Conference (ITSC)}, pp.~859--864, IEEE, 2021.

\bibitem{Bang2022combined}
H.~Bang, B.~Chalaki, and A.~A. Malikopoulos, ``{Combined Optimal Routing and Coordination of Connected and Automated Vehicles},'' {\em IEEE Control Systems Letters}, vol.~6, pp.~2749--2754, 2022.

\bibitem{quarteroni2006numerical}
A.~Quarteroni, R.~Sacco, and F.~Saleri, {\em Numerical mathematics}, vol.~37.
\newblock Springer Science \& Business Media, 2006.

\bibitem{shen2022polynomial}
Z.~Shen and K.~Serkh, ``Polynomial interpolation in the monomial basis is stable after all,'' {\em arXiv preprint arXiv:2212.10519}, 2022.

\bibitem{beaver2023lq}
L.~E. Beaver, ``Lq-ocp: Energy-optimal control for lq problems,'' {\em arXiv preprint arXiv:2310.00168}, 2023.

\bibitem{bang2023optimal}
H.~Bang and A.~A. Malikopoulos, ``Optimal trajectory planning meets network-level routing: Integrated control framework for emerging mobility systems,'' 2023 (in review, arXiv:2311.13193).

\end{thebibliography}

\end{document}